\newtheoremstyle{theoremstyle}{\baselineskip}{}{\itshape}{}{\bfseries}{.}{5pt}{\thmnumber{#2\ \,}\thmname{#1}\thmnote{ \mdseries #3}}
\theoremstyle{theoremstyle}
\newtheorem{theorem}{Theorem}[section]
\newtheorem{lemma}[theorem]{Lemma}
\newtheorem{corollary}[theorem]{Corollary}
\newtheoremstyle{examplestyle}{\baselineskip}{}{\normalfont}{}{\bfseries}{.}{5pt}{\thmnumber{#2\ \,}\thmname{\mdseries\itshape #1}}
\theoremstyle{examplestyle}
\numberwithin{equation}{theorem}
\newcommand{\C}{\mathbf C}
\newcommand{\N}{\mathbf N}
\newcommand{\Z}{\mathbf Z}
\newcommand{\tr}{\operatorname{tr}}
\title{Partial trace of a full symmetrizer}
\author{Randall R. Holmes}
\subjclass[2010]{
15A69,  
20C15, 	
20C30, 	
81P68  	
}
\keywords{partial trace, symmetrizer}
\address{Randall R. Holmes,
Department of Mathematics and Statistics,
Auburn University,
Auburn AL,
36849,
USA,
\texttt{holmerr@auburn.edu}}
\begin{document}
\setcounter{section}{-1}
\begin{abstract}
A formula for the partial trace of a full symmetrizer is obtained.  The formula is used to provide an inductive proof of the well-known formula for the dimension of a full symmetry class of tensors.
\end{abstract}

\maketitle
\markright{PARTIAL TRACE OF FULL SYMMETRIZER}

\section{Introduction}
\label{sec:Introduction}

Let \(A\) and \(B\) be vector spaces over the field \(\C\) of complex numbers.  The ``partial trace relative to \(B\)'' of a linear operator \(T\) on the tensor product \(A\otimes B\) is a certain linear operator \(\tr_B T\) on the space \(A\) (see Section \ref{sec:PartialTrace}).

In quantum physics, the partial trace is used in the definition of the density operator for a state on a subsystem.  This operator captures all of the information about the state that can be gained from measurements on the subsystem alone \cite[p.~207]{MR2791092}.

The partial trace has uses in pure mathematics as well.  In this paper, we provide a formula for the partial trace relative to \(V\) of a full symmetrizer of the tensor power \(V^{\otimes m}=V^{\otimes(m-1)}\otimes V\) of a vector space \(V\) (see Corollary \ref{cor:PartialTraceOfSymmetrizer}).

As an application, we use the formula to give a proof by induction of the well-known formula for the dimension of a full symmetry class of tensors (see Theorem \ref{thm:DimensionOfSymmetryClass}).

\section{Partial trace}
\label{sec:PartialTrace}

Let \(V\) be a vector space over \(\C\) with basis \(\{v_1,\dots,v_n\}\).  For each \(i\) the dual vector corresponding to \(v_i\) is the linear map \(v_i^*:V\to\C\) given by \(v_i^*(v_j)=\delta_{ij}\) (Kronecker delta).  Denote by \(L(V)\) the space of linear operators on \(V\).  For \(T\in L(V)\) denote by \(\tr T\) the trace of \(T\), so
\(\tr T=\sum_iv_i^*(T(v_i))\).

Let \(A\) and \(B\) be vector spaces over \(\C\) with bases \(\{a_1,\dots,a_m\}\) and \(\{b_1,\dots,b_n\}\), respectively.  Let \(T\in L(A\otimes B)\).  Define \(\tr_B T\in L(A)\) by
\[
\tr_B T (a_k)=\sum_{i=1}^m\sum_{j=1}^n(a_i\otimes b_j)^*(T(a_k\otimes b_j))a_i.
\]
The linear operator \(\tr_B T\) on \(A\) is the \emph{partial trace} of \(T\) relative to \(B\) \cite[pp.~210--211]{MR2791092}.  If \(A=\C\), the space \(A\otimes B\) identifies with \(B\) and \(\tr_B T=\tr T\).  At the other extreme, if \(B=\C\), the space \(A\otimes B\) identifies with \(A\) and \(\tr_B T=T\).

\begin{lemma}
\label{lem:TraceIsTraceOfPartialTrace}
  For every \(T\in L(A\otimes B)\), we have \(\tr T=\tr(\tr_B T)\).
\end{lemma}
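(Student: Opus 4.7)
The plan is to unravel both sides from the definitions and observe they reduce to the same double sum. Fix the bases $\{a_1,\dots,a_m\}$ of $A$ and $\{b_1,\dots,b_n\}$ of $B$, so that $\{a_k\otimes b_j\mid 1\le k\le m,\ 1\le j\le n\}$ is a basis of $A\otimes B$, with dual vectors $(a_k\otimes b_j)^*$ featured in the definition of $\tr_B T$.

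First I would compute the left-hand side directly: using the above basis of $A\otimes B$, the trace of $T$ is
\[
\tr T=\sum_{k=1}^m\sum_{j=1}^n(a_k\otimes b_j)^*\bigl(T(a_k\otimes b_j)\bigr).
\]
Next I would compute the right-hand side by substituting the definition of $\tr_B T$ into the formula $\tr(\tr_B T)=\sum_k a_k^*\bigl((\tr_B T)(a_k)\bigr)$, obtaining
\[
\tr(\tr_B T)=\sum_{k=1}^m a_k^*\!\left(\sum_{i=1}^m\sum_{j=1}^n(a_i\otimes b_j)^*\bigl(T(a_k\otimes b_j)\bigr)a_i\right).
\]
By linearity of $a_k^*$ and the relation $a_k^*(a_i)=\delta_{ki}$, the inner sum over $i$ collapses, leaving
\[
\tr(\tr_B T)=\sum_{k=1}^m\sum_{j=1}^n(a_k\otimes b_j)^*\bigl(T(a_k\otimes b_j)\bigr),
\]
which coincides with the expression for $\tr T$.

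There is no real obstacle here; the argument is purely a bookkeeping exercise. The only subtle point is the choice of basis used to compute $\tr T$ on $A\otimes B$, but since the trace is basis-independent, using the tensor-product basis $\{a_k\otimes b_j\}$ is legitimate and makes the two formulas line up term by term.
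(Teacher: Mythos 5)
Your proposal is correct and follows essentially the same route as the paper: expand \(\tr(\tr_B T)\) via the definition of the partial trace, collapse the sum over \(i\) using \(a_k^*(a_i)=\delta_{ki}\), and recognize the resulting double sum as \(\tr T\) computed in the basis \(\{a_k\otimes b_j\}\). Your explicit remark about basis-independence of the trace is a nice touch but the paper takes it for granted.
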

\begin{proof}
Let \(T\in L(A\otimes B)\) and let the notation be as above.  We have
\begin{align*}
\tr(\tr_B T)&=\sum_{k=1}^ma_k^*(\tr_B T(a_k))\\
&=\sum_{k=1}^m\sum_{j=1}^n(a_k\otimes b_j)^*(T(a_k\otimes b_j))\\
&=\tr T.
\end{align*}
\end{proof}

\begin{lemma}
\label{lem:PartialTraceOfProduct}
  For every \(S\in L(A)\) and \(R\in L(B)\), we have \(\tr_B(S\otimes R)=(\tr R)S\).
\end{lemma}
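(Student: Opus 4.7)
The plan is to prove this by direct computation from the definition of the partial trace applied to the operator $T = S \otimes R$. Since $(S\otimes R)(a_k\otimes b_j) = S(a_k) \otimes R(b_j)$, and the evaluation of the dual basis on a tensor factors as $(a_i\otimes b_j)^*(x\otimes y) = a_i^*(x)\, b_j^*(y)$, the double sum in the definition of $\tr_B(S\otimes R)(a_k)$ will split into a product of single sums over $i$ and $j$.

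Concretely, I would start from
\[
\tr_B(S\otimes R)(a_k) = \sum_{i=1}^m\sum_{j=1}^n (a_i\otimes b_j)^*\bigl(S(a_k)\otimes R(b_j)\bigr)\, a_i,
\]
then factor the dual pairing so the summand becomes $a_i^*(S(a_k))\,b_j^*(R(b_j))\,a_i$. Interchanging the order of summation and pulling $a_i^*(S(a_k))\,a_i$ outside the $j$-sum, the inner sum $\sum_j b_j^*(R(b_j))$ is precisely $\tr R$ by the definition given at the start of Section \ref{sec:PartialTrace}. The remaining $i$-sum $\sum_i a_i^*(S(a_k))\,a_i$ is just the expansion of $S(a_k)$ in the basis $\{a_1,\dots,a_m\}$, so one gets $(\tr R)\,S(a_k)$ for every basis vector $a_k$, and linearity yields the claimed identity $\tr_B(S\otimes R) = (\tr R)\,S$.

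There is no real obstacle here: the argument is entirely bookkeeping, and the only mild subtlety is noticing that the double sum decouples into the $\tr R$ factor and the expansion of $S(a_k)$. I would keep the writeup to a short align block showing the three substitutions (definition, factorization of the dual pairing, and recognition of $\tr R$ together with the basis expansion of $S(a_k)$), and then conclude by linearity in $a_k$.
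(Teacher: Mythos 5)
Your proposal is correct and follows essentially the same route as the paper's proof: expand the definition on a basis vector $a_k$, factor the dual pairing, recognize $\sum_j b_j^*(R(b_j))=\tr R$ and $\sum_i a_i^*(S(a_k))a_i=S(a_k)$, and conclude by linearity. No gaps.
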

\begin{proof}
Let \(S\in L(A)\) and \(R\in L(B)\) and let the notation be as above.  For each \(1\le k\le m\), we have
\begin{align*}
\tr_B(S\otimes R)(a_k)&=\sum_{i=1}^m\sum_{j=1}^n(a_i\otimes b_j)^*(S(a_k)\otimes R(b_j))a_i\\
&=\sum_{i=1}^m\sum_{j=1}^na_i^*(S(a_k))b_j^*(R(b_j))a_i\\
&=\sum_{j=1}^nb_j^*(R(b_j))\sum_{i=1}^ma_i^*(S(a_k))a_i\\
&=(\tr R)S(a_k),
\end{align*}
and the claim follows.
\end{proof}

\section{Symmetrizer}
\label{sec:Symmetrizer}

For \(l\in\N:=\{0,1,2,\dots\}\), put
\[
\Gamma^+_l=\{\gamma=(\gamma_1,\gamma_2,\dots,\gamma_l,0,0,\dots)\mid \gamma_i\in\N\text{ for each }i\}
\]
and put \(\Gamma^+=\cup_l\Gamma^+_l\).

Let \(\gamma\in\Gamma^+\) and let \(m\) be a positive integer.  We write \(\gamma\models m\) and say that \(\gamma\) is an \emph{improper partition} of \(m\) if \(\sum_i\gamma_i=m\).  We write \(\gamma\vdash m\) and say that \(\gamma\) is a \emph{partition} of \(m\) if \(\gamma\models m\) and \(\gamma_i\ge\gamma_{i+1}\) for each \(i\).

Let \(m\) be a nonnegative integer.  Denote by \(S_m\) the symmetric group of degree \(m\).
The \emph{cycle partition} of a permutation \(\sigma\in S_m\) is the partition of \(m\) obtained by writing the lengths of the cycles in a disjoint cycle decomposition of \(\sigma\) in nonincreasing order (followed by zeros).  Two elements of \(S_m\) are conjugate if and only if they have the same cycle partition (see \cite[p.~9]{MR644144} or \cite[Theorem 3.18]{MR1475219}).

The number of (ordinary) irreducible characters of \(S_m\) is the same as the number of conjugacy classes of \(S_m\), so these irreducible characters are indexed by the partitions of \(m\).  For \(\alpha\vdash m\), denote by \(\chi_\alpha\) the irreducible character of \(S_m\) corresponding to \(\alpha\) as in \cite[p.~36]{MR644144} and \cite[p.~99]{MR1475219}.

Let \(n\) be a positive integer. Put
\[
\Gamma_{m,n}=\{\gamma=(\gamma_1,\gamma_2,\dots,\gamma_m)\in\Z^m\mid1\le\gamma_i\le n\text{ for each }i\}
\]
(which equals the set consisting of the empty tuple \((\,)\) if \(m=0\)). There is a right action of the group \(S_m\) on the set \(\Gamma_{m,n}\) given by place permutation: \(\gamma\sigma:=(\gamma_{\sigma(1)},\dots,\gamma_{\sigma(m)})\) (\(\gamma\in\Gamma_{m,n}\), \(\sigma\in S_m\)).

Let \(V\) be a vector space over \(\C\) of dimension \(n\) and let \(\{e_1,e_2,\dots,e_n\}\) be a basis of \(V\).
The \(m\)th tensor power \(V^{\otimes m}\) of \(V\) has basis \(\{e_\gamma\mid\gamma\in\Gamma_{m,n}\}\), where \(e_\gamma:=e_{\gamma_1}\otimes\cdots\otimes e_{\gamma_m}\) (with \(V^{\otimes0}=\C\) and \(e_{(\,)}=1\)).  For \(\sigma\in S_m\) denote by \(P_m(\sigma)\) the linear operator on \(V^{\otimes m}\) given by \(P_m(\sigma)(e_\gamma)=e_{\gamma\sigma^{-1}}\) (\(\gamma\in\Gamma_{m,n}\)).

Let \(\alpha\vdash m\).  The linear operator \(T_\alpha\) on \(V^{\otimes m}\) given by
\[
T_\alpha=\frac{\chi_\alpha(1)}{m!}\sum_{\sigma\in S_m}\chi_\alpha(\sigma)P_m(\sigma)
\]
is the \emph{symmetrizer} of \(V^{\otimes m}\) corresponding to \(\alpha\) (or to \(\chi_\alpha\)).  This operator is often referred to as a \emph{full symmetrizer} to distinguish it from a symmetrizer corresponding to an irreducible character of a proper subgroup of \(S_m\), which is defined similarly.

From now on, we assume that \(m>0\).  Put \(A=V^{\otimes(m-1)}\) and \(B=V\).  We have \(V^{\otimes m}=A\otimes B\) so the partial trace \(\tr_BT_\alpha\) is defined.  The main result of the paper (see Corollary \ref{cor:PartialTraceOfSymmetrizer}) expresses this partial trace as a linear combination of symmetrizers \(T_\beta:V^{\otimes(m-1)}\to V^{\otimes(m-1)}\) for various \(\beta\vdash m-1\).  The proof of this result requires several preliminaries, to which we now turn.

For \(1\le i\le m\), put
\[
\kappa_i=
\begin{cases}
  n,&i=m,\\
  1,&i\not=m.
\end{cases}
\]

\begin{lemma}
\label{lem:PartialTraceOfGroupElement}
  Let \(\tau\in S_{m-1}\) and \(1\le i\le m\), and put \(\sigma=\tau(i,m)\in S_m\).  We have
  \[
  \tr_B P_m(\sigma)=\kappa_iP_{m-1}(\tau).
  \]
\end{lemma}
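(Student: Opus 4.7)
My plan is to split the argument into the cases $i = m$ and $1 \le i < m$.

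If $i = m$, then $(i, m)$ is the identity, so $\sigma = \tau$, viewed as the element of $S_m$ that fixes $m$. Because $\tau$ fixes $m$, for every $\gamma' \in \Gamma_{m-1, n}$ and $1 \le k \le n$ one has $(\gamma', k)\tau^{-1} = (\gamma' \tau^{-1}, k)$, whence $P_m(\sigma) = P_{m-1}(\tau) \otimes I_B$, where $I_B$ denotes the identity on $B$. Lemma \ref{lem:PartialTraceOfProduct} then yields $\tr_B P_m(\sigma) = (\tr I_B) P_{m-1}(\tau) = n\, P_{m-1}(\tau) = \kappa_m P_{m-1}(\tau)$, as desired.

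For $1 \le i < m$, I would evaluate $\tr_B P_m(\sigma)$ directly from the definition on an arbitrary basis vector $e_{\gamma'}$ of $A$. Writing $\sigma^{-1} = (i, m)\tau^{-1}$ and using that $\tau$ fixes $m$, one sees that $\sigma^{-1}(m) = i$, so the last coordinate of $(\gamma', k)\sigma^{-1}$ equals $\gamma'_i$ independently of $k$. Hence the coefficient $(e_{\gamma''} \otimes e_k)^*(P_m(\sigma)(e_{\gamma'} \otimes e_k))$ in the partial-trace formula is nonzero only when $k = \gamma'_i$, and the sum over $k$ collapses to that single term. A brief index chase then shows that $(\gamma', \gamma'_i)\sigma^{-1} = (\gamma' \tau^{-1}, \gamma'_i)$, so $\tr_B P_m(\sigma)(e_{\gamma'}) = e_{\gamma' \tau^{-1}} = P_{m-1}(\tau)(e_{\gamma'}) = \kappa_i P_{m-1}(\tau)(e_{\gamma'})$, since $\kappa_i = 1$.

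The main obstacle is the index bookkeeping in the second case. One must carefully track the three regimes of $\sigma^{-1}$: it sends $m$ to $i$, sends $\tau(i)$ to $m$, and acts as $\tau^{-1}$ at all remaining positions. Reassembling these correctly is what produces the tuple $(\gamma' \tau^{-1}, \gamma'_i)$ and lets the last coordinate $\gamma'_i$ be absorbed by the trace.
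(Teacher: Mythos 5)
Your proposal is correct and follows essentially the same route as the paper: the case \(i=m\) via Lemma \ref{lem:PartialTraceOfProduct}, and the case \(i<m\) by direct evaluation of the partial-trace sum, collapsing the sum over the last index to the single term \(k=\gamma'_i\) and verifying \((\gamma',\gamma'_i)\sigma^{-1}=(\gamma'\tau^{-1},\gamma'_i)\). The only difference is cosmetic: the paper packages the index chase with the auxiliary tuple \(\mu[j]\) and Kronecker deltas, which you leave as a sketch.
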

\begin{proof}
First assume that \(i=m\), so that \(\sigma=\tau\).  We have \(P_m(\sigma)=P_{m-1}(\tau)\otimes1_V\), so Lemma \ref{lem:PartialTraceOfProduct} gives
\[
\tr_B P_m(\sigma)=(\tr1_V)P_{m-1}(\tau)=nP_{m-1}(\tau)=\kappa_iP_{m-1}(\tau).
\]
Now assume that \(i\not=m\).  For \(\mu\in\Gamma_{m-1,n}\) and \(1\le j\le n\), put
\[
(\mu,j)=(\mu_1,\dots,\mu_{m-1},j)\in\Gamma_{m,n}.
\]
Fix \(\mu\in\Gamma_{m-1,n}\).  From the definition of partial trace, we get
\[
\tr_B P_m(\sigma)(e_\mu)=\sum_{\gamma\in\Gamma_{m-1,n}}\sum_{j=1}^n(e_\gamma\otimes e_j)^*(P_m(\sigma)(e_{(\mu,j)}))e_\gamma.
\]
For \(1\le j\le n\), define \(\mu[j]\in\Gamma_{m-1,n}\) by
\[
\mu[j]_k=
\begin{cases}
  j,&k=i,\\
  \mu_k,&k\not=i.
\end{cases}
\]
and note that \(\mu[\mu_i]=\mu\).  For each \(1\le j\le n\), we have
\[
(\mu,j)\sigma^{-1}=(\mu,j)(i,m)\tau^{-1}=(\mu[j],\mu_i)\tau^{-1}=(\mu[j]\tau^{-1},\mu_i),
\]
so
\[
P_m(\sigma)(e_{(\mu,j)})=e_{(\mu,j)\sigma^{-1}}=e_{(\mu[j]\tau^{-1},\mu_i)}=e_{\mu[j]\tau^{-1}}\otimes e_{\mu_i}.
\]
Returning to the formula above, we now have, using the Kronecker delta a few times,
\begin{align*}
  \tr_B P_m(\sigma)(e_\mu)&=\sum_{\gamma\in\Gamma_{m-1,n}}\sum_{j=1}^n(e_\gamma\otimes e_j)^*(e_{\mu[j]\tau^{-1}}\otimes e_{\mu_i})e_\gamma\\
  &=\sum_{\gamma,j}\delta_{\gamma,\mu[j]\tau^{-1}}\delta_{j,\mu_i}e_\gamma=\sum_\gamma\delta_{\gamma,\mu[\mu_i]\tau^{-1}}e_\gamma\\
  &=e_{\mu\tau^{-1}}=P_{m-1}(\tau)(e_\mu).
\end{align*}
Therefore, \(\tr_B P_m(\sigma)=P_{m-1}(\tau)=\kappa_iP_{m-1}(\tau)\), as claimed.
\end{proof}

\begin{lemma}
\label{lem:CosetReps}
  The set \(S_m\) is the disjoint union of the cosets \(S_{m-1}(i,m)\), \(1\le i\le m\).
\end{lemma}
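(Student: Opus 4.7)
The plan is to obtain the disjoint-union decomposition by identifying each right coset explicitly as a fiber of the map \(\sigma\mapsto\sigma^{-1}(m)\) from \(S_m\) to \(\{1,\dots,m\}\). More precisely, I would establish the characterization
\[
S_{m-1}(i,m)=\{\sigma\in S_m\mid\sigma^{-1}(m)=i\}\qquad(1\le i\le m),
\]
where \((m,m)\) is interpreted as the identity (consistent with the usage in Lemma \ref{lem:PartialTraceOfGroupElement}). Once this is in hand, the claim is immediate: every \(\sigma\in S_m\) has a unique value \(\sigma^{-1}(m)\in\{1,\dots,m\}\), so the displayed sets partition \(S_m\).

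To prove the characterization, I would argue by double inclusion. For the inclusion \(\subseteq\), I would take \(\pi\in S_{m-1}\) (so \(\pi(m)=m\)) and compute directly: when \(i<m\), \(\pi(i,m)(i)=\pi(m)=m\), and when \(i=m\) the element is just \(\pi\) itself, which fixes \(m\); in either case the resulting permutation sends \(i\) to \(m\), i.e.\ its inverse sends \(m\) to \(i\). For the reverse inclusion \(\supseteq\), given \(\sigma\in S_m\) with \(\sigma^{-1}(m)=i\), I would set \(\pi:=\sigma(i,m)\) and use the fact that \((i,m)\) is an involution to rewrite \(\sigma=\pi(i,m)\); then \(\pi(m)=\sigma((i,m)(m))=\sigma(i)=m\), so \(\pi\in S_{m-1}\) and hence \(\sigma\in S_{m-1}(i,m)\).

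I do not anticipate any serious obstacle; the entire argument reduces to bookkeeping, and the only minor subtlety is the convention that \((m,m)\) denotes the identity. As a sanity check, the index computation \([S_m:S_{m-1}]=m!/(m-1)!=m\) confirms that the \(m\) cosets listed in the statement must exhaust \(S_m\) the moment one knows they are distinct, so in fact only the \(\subseteq\) direction of the fiber characterization is strictly needed; I would include both directions for transparency.
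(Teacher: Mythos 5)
Your proposal is correct and follows essentially the same route as the paper: both identify the cosets \(S_{m-1}(i,m)\) via the map \(\sigma\mapsto\sigma^{-1}(m)\), using \(\tau=\sigma(i,m)\in S_{m-1}\) for surjectivity onto the union and the fact that elements of \(S_{m-1}(i,m)\) send \(i\) to \(m\) for disjointness. The fiber-characterization packaging and the counting sanity check are nice touches but do not change the substance of the argument.
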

\begin{proof}
  Let \(\sigma\in S_m\) and put \(i=\sigma^{-1}(m)\).  We have \(\tau:=\sigma(i,m)\in S_{m-1}\) and \(\sigma=\tau(i,m)\in S_{m-1}(i,m)\), so \(S_m\) is the union of the indicated cosets.

  Let \(1\le i,j\le m\) and assume that the cosets \(S_{m-1}(i,m)\) and \(S_{m-1}(j,m)\) intersect.  Then the cosets are equal, implying \((i,m)(j,m)=(i,m)(j,m)^{-1}\in S_{m-1}\), that is, \((i,m)(j,m)\) fixes \(m\).  Therefore, \(i=j\) and we conclude that the indicated cosets are disjoint..
\end{proof}

We will require the next two theorems about the irreducible characters of \(S_m\).  The statements require additional notation.

For \(\beta\in\Gamma^+\), denote by \(l(\beta)\) (length of \(\beta\)) the least \(l\in\N\) for which \(\beta_l\in\Gamma^+_l\).  If \(\beta\) is nonzero, \(l(\beta)\) is the least index \(l\) for which \(\beta_l\not=0\); otherwise, \(l(\beta)=0\).

Any \(\beta\models m\) corresponds, by an arrangement of its entries in nonincreasing order, to a uniquely determined partition of \(m\) and hence to a uniquely determined conjugacy class of \(S_m\), which we refer to as the conjugacy class corresponding to \(\beta\).  For \(\alpha\vdash m\) and \(\beta\models m\) we write \(\chi_\alpha(\beta)\) for the value of the character \(\chi_\alpha\) on the conjugacy class corresponding to \(\beta\).

For each positive integer \(i\) we write \(\varepsilon_i\) for the element \((0,\dots,0,\underset i1,0,\dots)\) of \(\Gamma^+\).

\begin{theorem}[{\cite[4.5]{Holmes2018}}]
\label{thm:ReciprocalIrreducible}
  For every \(\alpha\vdash m\) and every \(\beta\models m-1\), we have
  \[
  \sum_{\substack{i=1\\\alpha_i>\alpha_{i+1}}}^{l(\alpha)}(\alpha_i-i)\chi_{\alpha-\varepsilon_i}(\beta)=\sum_{j=1}^{l(\beta)}\beta_j\chi_\alpha(\beta+\varepsilon_j).
  \]
  \hfill\qed
\end{theorem}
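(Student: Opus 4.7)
The plan is to realize both sides of the identity as the value of the linearly-extended character $\chi_\alpha$ on a single element $g\,J_m$ of the group algebra $\C S_m$, where $g\in S_{m-1}\subset S_m$ is any permutation of cycle type $\beta$ (fixing $m$), and
\[
J_m:=\sum_{j=1}^{m-1}(j,m)\in\C S_m
\]
is the $m$-th Jucys--Murphy element. Two separate computations of $\chi_\alpha(g\,J_m)$ will yield the LHS and the RHS.

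For the LHS, I would use the branching rule: if $M_\alpha$ denotes the irreducible $S_m$-module affording $\chi_\alpha$, then $M_\alpha{\downarrow}_{S_{m-1}}=\bigoplus_{i:\alpha_i>\alpha_{i+1}}M_{\alpha-\varepsilon_i}$. Conjugation by any $\tau\in S_{m-1}$ permutes the transpositions $(j,m)$ among themselves, so $J_m$ is $S_{m-1}$-invariant and commutes with $\C S_{m-1}$. By Schur's lemma, $J_m$ acts on each summand $M_{\alpha-\varepsilon_i}$ as a scalar, which the classical content formula for Jucys--Murphy elements identifies as $\alpha_i-i$ (the content of the removable corner of $\alpha$). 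Tracing summand by summand gives
\[
\chi_\alpha(g\,J_m)=\sum_{i:\alpha_i>\alpha_{i+1}}(\alpha_i-i)\chi_{\alpha-\varepsilon_i}(g)=\sum_{i:\alpha_i>\alpha_{i+1}}(\alpha_i-i)\chi_{\alpha-\varepsilon_i}(\beta),
\]
which is the LHS.

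For the RHS, I would expand $J_m$ and track cycle structure. A short check in $S_m$ shows that right-multiplying $g$ by $(j,m)$ absorbs the fixed point $m$ into the cycle of $g$ containing $j$: if $j$ lies in the cycle indexed by $p$ (of length $\beta_p$), then $g(j,m)\in S_m$ has cycle type $\beta+\varepsilon_p$. Since a length-$\beta_p$ cycle contributes exactly $\beta_p$ choices of $j$, grouping the sum according to cycles produces
\[
\chi_\alpha(g\,J_m)=\sum_{j=1}^{m-1}\chi_\alpha(g(j,m))=\sum_{p=1}^{l(\beta)}\beta_p\,\chi_\alpha(\beta+\varepsilon_p),
\]
which is the RHS.

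The main obstacle is the content computation in the first step, namely verifying that the Schur-lemma scalar really is $\alpha_i-i$. This is the central non-trivial input from Jucys--Murphy theory, and to make the argument self-contained one would construct Young's seminormal basis of $M_\alpha$ indexed by standard tableaux, on which each Jucys--Murphy element acts diagonally with eigenvalue equal to the content of the box holding the corresponding number; the basis vectors whose tableaux place $m$ at the corner $(i,\alpha_i)$ span $M_{\alpha-\varepsilon_i}$ and all carry the eigenvalue $\alpha_i-i$.
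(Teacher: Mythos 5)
Your argument is correct, but note first that the paper does not actually prove this statement: it is imported from \cite[4.5]{Holmes2018} with a \(\qed\) and used as a black box, so there is no internal proof to compare against. Your Jucys--Murphy derivation is a legitimate independent route, and the two halves line up nicely with the paper. The right-hand computation --- expanding \(\chi_\alpha(gJ_m)=\sum_{j<m}\chi_\alpha(g(j,m))\) and observing that multiplying by \((j,m)\) absorbs the fixed point \(m\) into the cycle of \(g\) containing \(j\) --- is exactly the combinatorial step the paper itself performs inside the proof of Theorem \ref{thm:PartialTraceOfSymmetrizerPrime}, where Theorem \ref{thm:ReciprocalIrreducible} is then invoked to convert \(\sum_j\beta_j\chi_\alpha(\beta+\varepsilon_j)\) into the content-weighted sum over removable corners. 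The left-hand computation needs two inputs you should make explicit: (a) the branching decomposition is multiplicity free, which is what lets Schur's lemma force the \(S_{m-1}\)-equivariant operator \(J_m\) to act as a scalar on each summand rather than merely preserving isotypic components; and (b) the content formula identifying that scalar as \(\alpha_i-i\). You correctly isolate (b) as the essential nontrivial input; it is standard (Young's seminormal form, or the Okounkov--Vershik computation of the spectrum of \(J_m\)), but one must be sure it is established independently of the identity being proved, since identities of precisely this shape are sometimes used to \emph{derive} the Jucys--Murphy eigenvalues, and quoting such a source would make the argument circular. With that caveat, your approach buys a transparent conceptual proof: both sides are \(\tr(gJ_m)\) on the irreducible module for \(\alpha\), computed once via the eigenspace decomposition of \(J_m\) and once via cycle types, whereas the paper simply outsources the identity to an external reference.
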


\begin{theorem}[{(Branching theorem)\ \cite[2.4.3]{MR644144}}]
\label{thm:BranchingTheorem}
  For every \(\alpha\vdash m\) and every \(\tau\in S_{m-1}\) we have
  \[
  \chi_\alpha(\tau)=\sum^{l(\alpha)}_{\substack{i=1\\\alpha_i>\alpha_{i+1}}}\chi_{\alpha-\varepsilon_i}(\tau).
  \]
  \hfill\qed
\end{theorem}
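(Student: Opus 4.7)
The plan is to deduce the Branching theorem as the \(r=1\) instance of the Murnaghan--Nakayama rule. Regard \(\tau\in S_{m-1}\) as the element of \(S_m\) that fixes the symbol \(m\); then its cycle partition in \(S_m\) is obtained from its cycle partition in \(S_{m-1}\) by appending an extra \(1\) (for the new fixed point). I would compute \(\chi_\alpha(\tau)\) by stripping off this spurious \(1\)-cycle.

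Explicitly, the Murnaghan--Nakayama rule asserts \(\chi_\alpha(\pi)=\sum_\xi(-1)^{\operatorname{leg}(\xi)}\chi_{\alpha\setminus\xi}(\pi')\) whenever \(\pi\in S_m\) has a cycle of length \(r\); here \(\xi\) ranges over border strips of length \(r\) removable from the Young diagram of \(\alpha\), \(\alpha\setminus\xi\vdash m-r\) is the resulting partition, and \(\pi'\) denotes \(\pi\) with the chosen \(r\)-cycle deleted. A border strip of length \(1\) is simply a single removable corner cell \((i,\alpha_i)\), which exists exactly when \(\alpha_i>\alpha_{i+1}\); its removal produces \(\alpha-\varepsilon_i\), and its leg length is \(0\). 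Stripping off the extra \(1\)-cycle returns \(\tau\) as an element of \(S_{m-1}\), so summing over the removable corners of \(\alpha\) yields the stated identity.

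Two alternative routes are worth mentioning. Via Pieri's rule and Frobenius reciprocity: Pieri gives \(\mathrm{Ind}^{S_m}_{S_{m-1}}\chi_\beta=\sum_\mu\chi_\mu\), the sum over partitions \(\mu\vdash m\) obtained from \(\beta\vdash m-1\) by adding one box, and reciprocity then forces \(\langle\mathrm{Res}^{S_m}_{S_{m-1}}\chi_\alpha,\chi_\beta\rangle\) to equal \(1\) or \(0\) according as \(\beta=\alpha-\varepsilon_i\) for some removable corner \(i\) of \(\alpha\) or not, which is an equivalent formulation. Via Specht modules: filter the standard-tableau basis of \(S^\alpha\) by the position of the entry \(m\) (necessarily a removable corner), pick a compatible linear ordering of the corners, and verify by a Garnir-relation argument that the successive subquotients of the resulting \(S_{m-1}\)-stable filtration are isomorphic to the Specht modules \(S^{\alpha-\varepsilon_i}\); taking characters then delivers the formula.

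The main obstacle in any of these approaches is that each imports substantial background: Murnaghan--Nakayama rests on the Jacobi--Trudi identity and the Frobenius character formula, Pieri's rule needs its own combinatorial or generating-function proof, and the Specht-module filtration demands careful bookkeeping with Garnir relations. For a fully self-contained development, I would develop the Frobenius character formula first, since it provides a uniform engine that simultaneously handles both this result and Theorem \ref{thm:ReciprocalIrreducible}.
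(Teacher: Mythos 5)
The paper does not actually prove this statement: it is imported with a \(\qed\) directly from James--Kerber \cite[2.4.3]{MR644144}, so there is no in-paper argument to compare yours against. Your main route is correct as a derivation: in the Murnaghan--Nakayama rule with \(r=1\), a border strip of size \(1\) is a single removable corner cell, which sits at the end of row \(i\) precisely when \(\alpha_i>\alpha_{i+1}\); removing it gives \(\alpha-\varepsilon_i\), its leg length is \(0\) so every sign is \(+1\), and deleting the \(1\)-cycle \((m)\) from \(\tau\) recovers \(\tau\) as an element of \(S_{m-1}\) with unchanged cycle type. The Pieri/Frobenius-reciprocity argument and the Specht-module filtration are likewise standard and correct (the latter is essentially the proof in the cited source). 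The one substantive caveat is the one you raise yourself: each route defers the real content to a theorem at least as deep as the branching rule, so none of your sketches is self-contained, and the Murnaghan--Nakayama and Pieri inputs are themselves left unproved. Since the paper's purpose is to \emph{use} the branching rule rather than reprove it, a citation (as the paper gives) or your reduction to Murnaghan--Nakayama are equally legitimate; if you do want a self-contained development, your closing suggestion is the right one, since the Frobenius character formula is also the natural engine behind Theorem \ref{thm:ReciprocalIrreducible}, and deriving both results from it at once would make the paper's character-theoretic inputs uniform.
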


For \(\alpha\vdash m\), put
\[
T'_\alpha=\sum_{\sigma\in S_m}\chi_\alpha(\sigma)P_m(\sigma).
\]

\begin{theorem}
\label{thm:PartialTraceOfSymmetrizerPrime}
  For every \(\alpha\vdash m\), we have
  \[
  \tr_BT'_\alpha=\sum^{l(\alpha)}_{\substack{i=1\\\alpha_i>\alpha_{i+1}}}(n+\alpha_i-i)T'_{\alpha-\varepsilon_i}.
  \]
\end{theorem}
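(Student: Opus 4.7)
The plan is to expand \(\tr_B T'_\alpha\) directly from the definition, using the coset decomposition of Lemma \ref{lem:CosetReps} and the partial trace formula of Lemma \ref{lem:PartialTraceOfGroupElement}, then match the resulting expression against the right-hand side by invoking the branching theorem (Theorem \ref{thm:BranchingTheorem}) for the piece coming from \(i=m\) and Theorem \ref{thm:ReciprocalIrreducible} for the piece coming from \(i<m\).

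First I would write
\[
\tr_B T'_\alpha=\sum_{\sigma\in S_m}\chi_\alpha(\sigma)\tr_B P_m(\sigma)
=\sum_{i=1}^m\sum_{\tau\in S_{m-1}}\chi_\alpha(\tau(i,m))\,\kappa_i P_{m-1}(\tau).
\]
Splitting off \(i=m\), and noting that \((m,m)=e\) and \(\kappa_m=n\), the \(i=m\) contribution is \(n\sum_{\tau\in S_{m-1}}\chi_\alpha(\tau)P_{m-1}(\tau)\). The branching theorem applied pointwise in \(\tau\) rewrites \(\chi_\alpha(\tau)\) as \(\sum_i\chi_{\alpha-\varepsilon_i}(\tau)\) (sum over the permissible \(i\)), and then interchanging sums gives exactly \(n\sum_i T'_{\alpha-\varepsilon_i}\).

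The remaining contribution, with \(\kappa_i=1\), is
\[
\sum_{\tau\in S_{m-1}}P_{m-1}(\tau)\sum_{i=1}^{m-1}\chi_\alpha(\tau(i,m)).
\]
The key combinatorial identity I need is that, for \(\tau\in S_{m-1}\) with cycle partition \(\beta\vdash m-1\),
\[
\sum_{i=1}^{m-1}\chi_\alpha(\tau(i,m))=\sum_{j=1}^{l(\beta)}\beta_j\,\chi_\alpha(\beta+\varepsilon_j).
\]
This holds because multiplying \(\tau\) (which fixes \(m\)) by the transposition \((i,m)\) splices \(m\) into the cycle of \(\tau\) containing \(i\), enlarging that cycle by one. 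Grouping the \(m-1\) values of \(i\) by the length of the cycle of \(\tau\) containing them, and using that \(\chi_\alpha\) is a class function (so depends only on the resulting cycle partition), gives the identity; this step is the main bookkeeping obstacle, since parts of \(\beta\) can repeat and one must check the multiplicities carefully.

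Granted this identity, Theorem \ref{thm:ReciprocalIrreducible} (with the substitution \(\beta\leftrightarrow\) cycle partition of \(\tau\), using \(\chi_{\alpha-\varepsilon_i}(\beta)=\chi_{\alpha-\varepsilon_i}(\tau)\)) converts the inner sum to \(\sum_i(\alpha_i-i)\chi_{\alpha-\varepsilon_i}(\tau)\). Interchanging sums yields \(\sum_i(\alpha_i-i)T'_{\alpha-\varepsilon_i}\). Adding the two pieces produces the coefficient \(n+\alpha_i-i\) on \(T'_{\alpha-\varepsilon_i}\), completing the proof.
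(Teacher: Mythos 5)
Your proposal is correct and follows essentially the same route as the paper: coset decomposition via Lemma \ref{lem:CosetReps}, the partial trace computation of Lemma \ref{lem:PartialTraceOfGroupElement}, the cycle-splicing identity \(\sum_{i=1}^{m-1}\chi_\alpha(\tau(i,m))=\sum_j\beta_j\chi_\alpha(\beta+\varepsilon_j)\), then Theorem \ref{thm:ReciprocalIrreducible} and the branching theorem. The bookkeeping you flag as a possible obstacle is handled cleanly in the paper by interpreting \(\chi_\alpha(\beta+\varepsilon_j)\) as the value on the conjugacy class of the improper partition \(\beta+\varepsilon_j\), so repeated parts cause no difficulty.
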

\begin{proof}
Let \(\alpha\vdash m\).  Using linearity of the partial trace, and then Lemmas \ref{lem:CosetReps} and \ref{lem:PartialTraceOfGroupElement}, we get
\begin{align*}
  \tr_B T'_\alpha&=\sum_{\sigma\in S_m}\chi_\alpha(\sigma)\tr_B(P_m(\sigma))\\
  &=\sum_{\tau\in S_{m-1}}\sum_{i=1}^m\chi_\alpha(\tau(i,m))\tr_B(P_m(\tau(i,m)))\\
  &=\sum_{\tau\in S_{m-1}}\sum_{i=1}^{m-1}\chi_\alpha(\tau(i,m))P_{m-1}(\tau)+n\sum_{\tau\in S_{m-1}}\chi_\alpha(\tau)P_{m-1}(\tau).
\end{align*}
Fix \(\tau\in S_{m-1}\) and let \(\tau=\tau_1\cdots\tau_t\) be a complete factorization of \(\tau\) as a product of disjoint cycles (including \(1\)-cycles, so that each \(1\le i\le m-1\) actually appears), and assume that the factors are in order of nonincreasing lengths.  Fix \(1\le i<m\).  We have \(i\in\tau_j\) (meaning \(i\) appears in \(\tau_j\)) for a uniquely determined \(j\).  If \(\tau_j\) has length \(l\), then \(\tau_j(i,m)\) is a cycle of length \(l+1\) since it is the same as \(\tau_j\) except with \(m\) inserted directly after \(i\).  In particular, if \(\beta\vdash(m-1)\) is the cycle partition of \(\tau\), it follows that \(\tau(i,m)\) is in the conjugacy class of \(S_m\) corresponding to \(\beta+\varepsilon_j\).  Therefore,
\begin{align*}
  \sum_{i=1}^{m-1}\chi_\alpha(\tau(i,m))&=\sum_{j=1}^t\sum_{i\in\tau_j}\chi_\alpha(\beta+\varepsilon_j)\\
  &=\sum_{j=1}^{l(\beta)}\beta_j\chi_\alpha(\beta+\varepsilon_j)\\
  &=\sum_{\substack{i=1\\\alpha_i>\alpha_{i+1}}}^{l(\alpha)}(\alpha_i-i)\chi_{\alpha-\varepsilon_i}(\tau),
\end{align*}
where the last step uses Theorem \ref{thm:ReciprocalIrreducible}.  Substituting this into the earlier equation and using Theorem \ref{thm:BranchingTheorem} in the second sum, we get
\begin{align*}
  \tr_B T'_\alpha&=\sum_{\tau\in S_{m-1}}\sum_{\substack{i=1\\\alpha_i>\alpha_{i+1}}}^{l(\alpha)}(\alpha_i-i)\chi_{\alpha-\varepsilon_i}(\tau)P_{m-1}(\tau)\\
  &\qquad+n\sum_{\tau\in S_{m-1}}\sum_{\substack{i=1\\\alpha_i>\alpha_{i+1}}}^{l(\alpha)}\chi_{\alpha-\varepsilon_i}(\tau)P_{m-1}(\tau)\\
  &=\sum^{l(\alpha)}_{\substack{i=1\\\alpha_i>\alpha_{i+1}}}(n+\alpha_i-i)\sum_{\tau\in S_{m-1}}\chi_{\alpha-\varepsilon_i}(\tau)P_{m-1}(\tau)\\
  &=\sum^{l(\alpha)}_{\substack{i=1\\\alpha_i>\alpha_{i+1}}}(n+\alpha_i-i)T'_{\alpha-\varepsilon_i}.
\end{align*}
\end{proof}

Since \(T_\alpha=(\chi_\alpha(1)/m!)T'_\alpha\) for \(\alpha\vdash m\), we get an immediate consequence.

\begin{corollary}
\label{cor:PartialTraceOfSymmetrizer}
  For every \(\alpha\vdash m\), we have
  \[
  \tr_BT_\alpha=\sum^{l(\alpha)}_{\substack{i=1\\\alpha_i>\alpha_{i+1}}}\frac{\chi_\alpha(1)(n+\alpha_i-i)}{m\chi_{\alpha-\varepsilon_i}(1)}T_{\alpha-\varepsilon_i}.
  \]
\end{corollary}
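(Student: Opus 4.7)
The plan is to derive the corollary directly from Theorem \ref{thm:PartialTraceOfSymmetrizerPrime} by the substitution $T'_\beta = (|S_k|/\chi_\beta(1))\, T_\beta$ for each partition $\beta$ of each relevant integer $k$, together with linearity of the partial trace; the author signals as much by calling it an ``immediate consequence.'' So there is no genuine new content to produce, only a bookkeeping exercise in scalars.

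Concretely, I would proceed as follows. First, rewrite the definition of $T_\alpha$ as the scalar identity $T'_\alpha=(m!/\chi_\alpha(1))\,T_\alpha$, valid for any $\alpha\vdash m$. Apply this on the left-hand side of Theorem \ref{thm:PartialTraceOfSymmetrizerPrime} and use linearity of $\tr_B$ to pull out $m!/\chi_\alpha(1)$, giving
\[
\frac{m!}{\chi_\alpha(1)}\,\tr_B T_\alpha=\sum^{l(\alpha)}_{\substack{i=1\\\alpha_i>\alpha_{i+1}}}(n+\alpha_i-i)\,T'_{\alpha-\varepsilon_i}.
\]
Next, for each index $i$ appearing on the right, note that $\alpha-\varepsilon_i\vdash m-1$, so the same identity (with $m$ replaced by $m-1$ and $\alpha$ replaced by $\alpha-\varepsilon_i$) gives $T'_{\alpha-\varepsilon_i}=((m-1)!/\chi_{\alpha-\varepsilon_i}(1))\,T_{\alpha-\varepsilon_i}$. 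Substituting and multiplying both sides by $\chi_\alpha(1)/m!$ produces the factor $(m-1)!\,\chi_\alpha(1)/(m!\,\chi_{\alpha-\varepsilon_i}(1))=\chi_\alpha(1)/(m\,\chi_{\alpha-\varepsilon_i}(1))$ in front of each $T_{\alpha-\varepsilon_i}$, which is exactly the coefficient claimed in the corollary.

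The only thing worth being slightly careful about is the well-known fact that $\alpha-\varepsilon_i$ is actually a partition of $m-1$ precisely under the side condition $\alpha_i>\alpha_{i+1}$ imposed in the sum (otherwise the entries would not be in nonincreasing order, and the symbol $T_{\alpha-\varepsilon_i}$ would not be defined as a symmetrizer); this is guaranteed by the hypothesis appearing under the summation sign, so the substitution above is legitimate term by term. There is no real obstacle beyond matching constants, and the ``hard part,'' if one can call it that, is just keeping track of $m!/(m-1)!=m$ in the combined coefficient.
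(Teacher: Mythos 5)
Your proof is correct and is exactly the argument the paper intends: the corollary is obtained from Theorem \ref{thm:PartialTraceOfSymmetrizerPrime} by substituting \(T'_\alpha=(m!/\chi_\alpha(1))T_\alpha\) and \(T'_{\alpha-\varepsilon_i}=((m-1)!/\chi_{\alpha-\varepsilon_i}(1))T_{\alpha-\varepsilon_i}\) and tracking the factor \(m!/(m-1)!=m\). Your added remark that the condition \(\alpha_i>\alpha_{i+1}\) is what makes \(\alpha-\varepsilon_i\) a genuine partition of \(m-1\) is a sensible point of care that the paper leaves implicit.
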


\section{Dimension of symmetry class}
\label{sec:DimensionOfSymmetryClass}

As an application of Theorem \ref{thm:PartialTraceOfSymmetrizerPrime} we give a proof by induction of the well-known formula for the dimension of the space \(V^{\chi_\alpha}:=T_\alpha(V^{\otimes m})\) (\(\alpha\vdash m\)), which is known as the \emph{symmetry class} of tensors corresponding to the irreducible character \(\chi_\alpha\) of \(S_m\) and the vector space \(V\).

\begin{theorem}[\cite{MR1475219}]
\label{thm:DimensionOfSymmetryClass}
  For every \(\alpha\vdash m\), we have
  \[
  \dim V^{\chi_\alpha}=\frac{(\chi_\alpha(1))^2}{m!}\prod_{i=1}^{l(\alpha)}\prod_{j=1}^{\alpha_i}(n+j-i),
  \]
  where \(n=\dim V\).
\end{theorem}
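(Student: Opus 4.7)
The plan is to induct on \(m\). The key input from outside the paper is that \(T_\alpha\) is an idempotent operator (it arises as the image under the representation \(P_m\) of the central idempotent of \(\C S_m\) associated to \(\chi_\alpha\), using that \(\chi_\alpha\) is real-valued and that \(\sigma\) and \(\sigma^{-1}\) are conjugate in \(S_m\)), so
\[
\dim V^{\chi_\alpha}=\dim T_\alpha(V^{\otimes m})=\tr T_\alpha.
\]
The base case \(m=1\) is immediate: for \(\alpha=(1)\) we have \(\chi_\alpha(1)=1\), \(V^{\chi_\alpha}=V\), and both sides equal \(n\).

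For the inductive step I would chain together Lemma \ref{lem:TraceIsTraceOfPartialTrace} (\(\tr T_\alpha=\tr(\tr_BT_\alpha)\)) and Corollary \ref{cor:PartialTraceOfSymmetrizer} (which expands \(\tr_BT_\alpha\) as a linear combination of the symmetrizers \(T_{\alpha-\varepsilon_i}\) on \(V^{\otimes(m-1)}\), over indices \(i\) with \(\alpha_i>\alpha_{i+1}\), each such \(\alpha-\varepsilon_i\) being a genuine partition of \(m-1\)). Taking traces of both sides and applying the inductive hypothesis to each \(\tr T_{\alpha-\varepsilon_i}=\dim V^{\chi_{\alpha-\varepsilon_i}}\) yields an expression for \(\dim V^{\chi_\alpha}\) as a single sum over such \(i\).

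Two short observations then collapse this sum into the desired closed form. First, the double product \(\prod_k\prod_j(n+j-k)\) for \(\alpha-\varepsilon_i\) equals the analogous product for \(\alpha\) divided by the single factor \(n+\alpha_i-i\), because removing a box from the end of row \(i\) strips off exactly the term indexed by \((k,j)=(i,\alpha_i)\). Second, evaluating the branching theorem (Theorem \ref{thm:BranchingTheorem}) at \(\tau=1\in S_{m-1}\) gives \(\sum_i\chi_{\alpha-\varepsilon_i}(1)=\chi_\alpha(1)\). Substituting these into the inductive sum, the factors \(n+\alpha_i-i\) cancel and the branching identity supplies the missing \(\chi_\alpha(1)\); the stated formula then drops out. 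The only point requiring a moment's care is the boundary case \(\alpha_i=1\), \(i=l(\alpha)\) in the row-removal identity (when the \(i\)th row disappears from \(\alpha-\varepsilon_i\) entirely), but the single missing factor is then \(n+1-i=n+\alpha_i-i\), so the identity still holds; beyond this subtlety the proof is a direct substitute-and-cancel calculation.
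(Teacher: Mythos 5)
Your proposal is correct and follows essentially the same route as the paper: induction on \(m\), reducing \(\dim V^{\chi_\alpha}\) to \(\tr T_\alpha\) via idempotency, applying Lemma \ref{lem:TraceIsTraceOfPartialTrace} together with the partial-trace recursion, cancelling the factor \(n+\alpha_i-i\) against the row-removal product identity, and finishing with the branching theorem at \(\tau=1\). The only (immaterial) difference is that you work with the normalized symmetrizers \(T_\alpha\) via Corollary \ref{cor:PartialTraceOfSymmetrizer}, whereas the paper strips the normalization and runs the induction on \(\tr T'_\alpha\) using Theorem \ref{thm:PartialTraceOfSymmetrizerPrime}.
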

\begin{proof}
Let \(\alpha\vdash m\).  Since \(T_\alpha^2=T_\alpha\) \cite[Theorem 6.3]{MR1475219}, every eigenvalue of \(T_\alpha\) is either \(0\) or \(1\), so  \(\dim V^{\chi_\alpha}\) is the rank of \(T_\alpha\), which equals the trace of \(T_\alpha\):
\[
\dim V^{\chi_\alpha}=\tr T_\alpha=\frac{\chi_\alpha(1)}{m!}\tr T'_\alpha.
\]
Therefore, it suffices to show that
\[
\tr T'_\alpha=\chi_\alpha(1)\prod_{i=1}^{l(\alpha)}\prod_{j=1}^{\alpha_i}(n+j-i).
\]
We proceed by induction on \(m\).  If \(m=1\), then \(\alpha=(1,0,0,\dots)\) and \(\chi_\alpha\) is the trivial character, implying that \(T'_\alpha\) is the identity map on \(V\) and both sides of the equation equal \(n\).  Assume that \(m>1\).  Using Lemma \ref{lem:TraceIsTraceOfPartialTrace}, Theorem \ref{thm:PartialTraceOfSymmetrizerPrime}, and the induction hypothesis in turn, we get
\begin{align*}
  \tr T'_\alpha&=\tr(\tr_B T'_\alpha)\\
  &=\sum^{l(\alpha)}_{\substack{k=1\\\alpha_k>\alpha_{k+1}}}(n+\alpha_k-k)\tr(T'_{\alpha-\varepsilon_k})\\
  &=\sum^{l(\alpha)}_{\substack{k=1\\\alpha_k>\alpha_{k+1}}}(n+\alpha_k-k)\chi_{\alpha-\varepsilon_k}(1)\prod_{i=1}^{l(\alpha-\varepsilon_k)}\prod_{j=1}^{(\alpha-\varepsilon_k)_i}(n+j-i).
\end{align*}
Now, for each \(1\le k\le l(\alpha)\) with \(\alpha_k>\alpha_{k+1}\),
\begin{align*}
  (n+\alpha_k-k)\prod_{i=1}^{l(\alpha-\varepsilon_k)}&\prod_{j=1}^{(\alpha-\varepsilon_k)_i}(n+j-i)\\
  &=(n+\alpha_k-k)\prod_{\substack{i=1\\i\not=k}}^{l(\alpha)}\prod_{j=1}^{\alpha_i}(n+j-i)\prod_{j=1}^{\alpha_k-1}(n+j-k)\\
  &=\prod_{i=1}^{l(\alpha)}\prod_{j=1}^{\alpha_i}(n+j-i),
\end{align*}
so substituting into the earlier formula we get
\begin{align*}
  \tr T'_\alpha&=\sum^{l(\alpha)}_{\substack{k=1\\\alpha_k>\alpha_{k+1}}}\chi_{\alpha-\varepsilon_k}(1)\prod_{i=1}^{l(\alpha)}\prod_{j=1}^{\alpha_i}(n+j-i)\\
  &=\chi_\alpha(1)\prod_{i=1}^{l(\alpha)}\prod_{j=1}^{\alpha_i}(n+j-i),
\end{align*}
where the last step uses Theorem \ref{thm:BranchingTheorem}.  This completes the proof.
\end{proof}

\bibliographystyle{amsalpha}
\bibliography{HolmesBib}

\end{document}